\documentclass[12pt]{amsart}
\usepackage[utf8]{inputenc}
\input{commands.tex}

\setlength{\parindent}{0em}
\setlength{\parskip}{0.5em}
\renewcommand{\P}{\mathbb{P}}
\let\til\widetilde
\providecommand{\id}{\text{id}}
\providecommand{\GW}{\text{GW}}
\providecommand{\Spec}{\text{Spec}\hspace{0.1em}}
\providecommand{\Tr}{\text{Tr}}
\providecommand{\A}{\mathbb{A}}
\providecommand{\G}{\mathbb{G}}

\let\minus\smallsetminus
\let\bar\overline
\let\smashprod\wedge
\providecommand{\SH}{\mathcal{SH}}
\newcommand{\mf}[1]{\mathfrak{#1}}

\usepackage{hyperref}
\usepackage[margin=2.9cm]{geometry}

\renewcommand{\O}{\mathcal{O}}
\newcommand{\Th}{\text{Th}}

\providecommand{\ind}{\text{ind}}
\providecommand{\Hom}{\text{Hom}}
\providecommand{\End}{\text{End}}
\providecommand{\Spc}{\text{Spc}}

\title{The trace of the local \texorpdfstring{$\mathbb{A}^1$}{A1}-degree}
\author[Brazelton]{Thomas Brazelton}
\author[Burklund]{Robert Burklund}
\author[McKean]{Stephen McKean}
\author[Montoro]{Michael Montoro}
\author[Opie]{Morgan Opie}
\date{\today}
\subjclass[2010]{14F42,\ 55P42,\ 55M25}
\begin{document}
\begin{abstract}
We prove that the local $\A^1$-degree of a polynomial function at an isolated zero with finite separable residue field is given by the trace of the local $\A^1$-degree over the residue field. This fact was originally suggested by Morel's work on motivic transfers, and by Kass and Wickelgren's work on the Scheja--Storch bilinear form. As a corollary, we generalize a result of Kass and Wickelgren relating the Scheja--Storch form and the local $\A^1$-degree.
\end{abstract}
\maketitle

\section{Introduction}
The $\A^1$-degree, first defined by Morel \cite{Morel-Trieste,Morel-book}, provides a foundational tool for solving problems in $\A^1$\textit{-enumerative geometry}.\footnote{$\A^1$-enumerative geometry is the application of $\A^1$-homotopy theory to the study of enumerative geometry over arbitrary fields. For details, see the expository paper \cite{unstablehandbook}, as well as the exposition found in \cite{KW17,Levine,Bethea,SW,KW-EKL,VogtLarson}.} In contrast to classical notions of degree, the local $\mathbb{A}^1$-degree is not integer valued: given a polynomial function $f:\A^n_k \to \A^n_k$ with isolated zero $p$, the local $\A^1$-degree of $f$ at $p$, denoted by $\deg_p^{\A^1}(f)$, is defined to be an element of $\GW(k)$, the Grothendieck--Witt group of the ground field. 
\begin{definition}
Let $k$ be a field. The \textit{Grothendieck--Witt group} $\GW(k)$ is defined to be the group completion of the monoid of isomorphism classes of symmetric non-degenerate bilinear forms over $k$. The group operation is the direct sum of bilinear forms. We may also give $\GW(k)$ a ring structure by taking tensor products of bilinear forms for our multiplication.
\end{definition}
At rational points, the local $\A^1$-degree can be related to other important invariants. The Scheja--Storch form is another $\GW(k)$-valued invariant defined via a duality on the local complete intersection cut out by the components of a given polynomial map (see  Subsection~\ref{subsec:SS} for details). Kass and Wickelgren show that the isomorphism class of the Scheja--Storch bilinear form \cite{SchejaStorch} is equal to the local $\A^1$-degree at rational points \cite{KW-EKL}. Kass and Wickelgren also show that at points with finite separable residue field, the Scheja--Storch form is given by taking the trace of the Scheja--Storch form over the residue field \cite[Proposition~32]{KW17}. 

In practice, one may need to consider the local $\A^1$-degree at non-rational points. This is the case of interest to us. At points whose residue field is a finite extension of the ground field, Morel's work on cohomological transfer maps \cite{Morel-book} suggests the following formula: the local $\A^1$-degree at a non-rational point should be computed by first taking the local $\A^1$-degree over the residue field, and then by post-composing with a field trace. This suggestion is supported by the aforementioned results of Kass--Wickelgren on the Scheja--Storch form \cite[Proposition~32]{KW17}. Our main result is to confirm this formula. We state our result precisely in Theorem~\ref{thm:EKL-is-deg}, after introducing necessary terminology. 

\begin{definition}\label{def:trace-GW}
Given a separable field extension $L/k$ of finite degree, the {\it trace} $$\Tr_{L/k}:\GW(L)\to\GW(k)$$ is given by post-composing the field trace (which we also denote $\Tr_{L/k}$). That is, if $\beta:V\times V\to L$ is a representative of an isomorphism class of symmetric non-degenerate bilinear forms over $L$, then its trace is the isomorphism class of the following symmetric bilinear form over $k$
\[\Tr_{L/k}\beta:V\times V\to L\xrightarrow{\Tr_{L/k}}k.\]
\end{definition}

When $p$ is not a $k$-rational point, we can lift $f$ to a function $f\otimes_k k(p):\A^n_{k(p)}\to\A^n_{k(p)}$ after fixing a choice of field embedding $k \hookto k(p)$. Moreover, we may lift $p$ to an isolated $k(p)$-rational zero $\til{p}$ of $f\otimes_k k(p)$, and we thus obtain the local degree $\deg_{\til{p}}^{\A^1}(f\otimes_k k(p))\in\GW(k(p))$. 

We can now state our main result:

\begin{theorem}\label{thm:EKL-is-deg} Let $k$ be a field, $f:\A_k^n \to \A_k^n$ be an endomorphism of affine space, and let $p\in \A_k^n$ be an isolated root of $f$ such that $k(p)$ is a separable extension of finite degree over $k$. Let $\til{p}$ denote the canonical point above $p$. Then
\begin{align*}
    \deg^{\A^1}_p f = \Tr_{k(p)/k} \deg_{\til{p}}^{\A^1} \left(f \otimes_k k(p) \right)
\end{align*}
 in $\GW(k)$.
\end{theorem}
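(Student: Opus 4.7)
The plan is to compare both sides of the claimed identity with the Scheja-Storch bilinear form $w_{SS,p}(f) \in \GW(k)$, which serves as a common intermediary.

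First, I would reduce the right-hand side. Setting $L := k(p)$, the hypotheses give that $L/k$ is finite separable and $\til p$ is an $L$-rational point of $\A_L^n$. The Kass-Wickelgren identification of the local $\A^1$-degree with the Scheja-Storch form at rational points \cite{KW-EKL} applies, yielding
\begin{equation*}
    \deg^{\A^1}_{\til p}(f \otimes_k L) = w_{SS,\til p}(f \otimes_k L) \in \GW(L),
\end{equation*}
and then \cite[Proposition 32]{KW17} identifies $\Tr_{L/k} w_{SS,\til p}(f \otimes_k L)$ with $w_{SS,p}(f) \in \GW(k)$. Thus the right-hand side of the theorem equals $w_{SS,p}(f)$.

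Next, I need the companion identification $\deg^{\A^1}_p f = w_{SS,p}(f)$ for the left-hand side. This is a genuine extension of \cite{KW-EKL} to points with finite separable (non-rational) residue field. My strategy here is to exploit the Nisnevich-local character of both constructions: the local $\A^1$-degree at $p$ is determined by any pointed Nisnevich neighborhood $(U, q) \to (\A_k^n, p)$ with $k(q) = k(p)$, and the Scheja-Storch class enjoys analogous invariance. Via Hensel's lemma and the separability of $L/k$, one can choose the neighborhood so that the local algebra $\O_{\A_k^n,p}/(f_1,\ldots,f_n)$ admits an explicit presentation over which both classes may be computed and compared term by term.

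This final step, extending \cite{KW-EKL} beyond the rational case, is the main obstacle. An alternative and perhaps more intrinsic route avoids Scheja-Storch entirely: realize $\deg^{\A^1}_p f$ as the motivic trace in $\SH(k)$ of $\deg^{\A^1}_{\til p}(f \otimes_k L)$ along the finite étale morphism $\Spec L \to \Spec k$, and then invoke Morel's identification of this motivic transfer with the algebraic trace $\Tr_{L/k}$ on Grothendieck-Witt groups. This route would deliver the theorem in one stroke, at the cost of importing the transfer formalism on $\SH(k)$ and verifying its compatibility with $\Tr_{L/k}$; the delicate point in either strategy is the careful comparison between the Pontryagin-Thom construction underlying $\deg^{\A^1}_p$ and the algebraic trace of bilinear forms induced by $L/k$.
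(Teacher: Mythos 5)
Your first route reduces the theorem to the identity $\deg^{\A^1}_p f = w_{SS,p}(f)$ at a finite separable non-rational point $p$, and you correctly flag this as the main obstacle. But that identity is precisely Corollary~\ref{cor:SS-is-local-deg} of the paper, and the authors prove it \emph{as a consequence} of Theorem~\ref{thm:EKL-is-deg}---by running the same chain of equalities you wrote down, in the reverse direction. So you would need an independent proof, and the sketch you offer (Nisnevich-local invariance plus Hensel's lemma to get a good presentation of the local algebra) does not supply one: those tools let you replace $\A^n_k$ by an \'etale neighborhood of $p$, but the real difficulty is matching the homotopy-theoretic collapse map at a non-rational point with an algebraic trace of bilinear forms, and nothing in your sketch touches that.

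Your alternative route is the one the paper actually takes, and it is the right architecture. The load-bearing steps, absent from your proposal, are: (i) in $\SH(k)$ the trace $\Tr_{L/k}\omega$ of $\omega\in\GW(L)$ is realized as the composite $\epsilon\circ\rho_\#\omega\circ\eta$ along $\rho:\Spec L\to\Spec k$ (Lemma~\ref{hoyois-prop}, following Hoyois rather than Morel directly); (ii) the Pontryagin-Thom collapse $\P^n_k/\P^{n-1}_k\to\P^n_k/(\P^n_k\minus\{p\})\simeq\P^n_L/\P^{n-1}_L$ appearing in the definition of $\deg^{\A^1}_p f$ is exactly $\P^n_k/\P^{n-1}_k\smashprod\eta$ (Lemma~\ref{lem:collapse-map}); and (iii) the purity equivalence comparing $\P^n_k/(\P^n_k\minus\{p\})$ with $\P^n_L/\P^{n-1}_L$ (the map $r$ in the paper's diagram) contributes only the trivial class $\gw{1}$, so it drops out of the final composite. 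You correctly identify the ``careful comparison between the Pontryagin-Thom construction\ldots and the algebraic trace'' as the delicate point---but that comparison \emph{is} the proof, and as it stands your proposal is a roadmap rather than an argument.
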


As a corollary, we strengthen Kass and Wickelgren's result relating the local $\A^1$-degree and the Scheja--Storch form \cite{KW-EKL} by weakening the requirement that the point be rational.

\begin{corollary}\label{cor:SS-is-local-deg} At points whose residue fields are finite separable extensions of the ground field, the local $\A^1$-degree coincides with the isomorphism class of the Scheja--Storch form.
\end{corollary}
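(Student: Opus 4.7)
The plan is to chain together three known results: Theorem \ref{thm:EKL-is-deg} proved above, the Kass--Wickelgren identification of the Scheja--Storch form with the local $\A^1$-degree at rational points \cite{KW-EKL}, and the Kass--Wickelgren trace formula for Scheja--Storch at points with finite separable residue field \cite[Proposition~32]{KW17}. The whole argument is a short diagram-chase in $\GW(k)$, so I expect no genuine obstacle; the only care needed is to align the geometric setup (the canonical point $\til{p}$ above $p$, base change of $f$ to $k(p)$) used in Theorem \ref{thm:EKL-is-deg} with the setup in \cite[Proposition~32]{KW17}.

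First, let $f : \A_k^n \to \A_k^n$ have an isolated zero at $p$ with $k(p)/k$ finite separable, and let $\til{p}$ be the canonical point above $p$. Since $\til{p}$ is a $k(p)$-rational point of $\A_{k(p)}^n$ which is an isolated zero of $f \otimes_k k(p)$, the main theorem of \cite{KW-EKL} applies over the base field $k(p)$ and gives
\begin{equation*}
    \deg^{\A^1}_{\til{p}}\bigl(f \otimes_k k(p)\bigr) \;=\; w^{\mathrm{SS}}_{\til{p}}\bigl(f \otimes_k k(p)\bigr)
\end{equation*}
as an equality in $\GW(k(p))$, where $w^{\mathrm{SS}}$ denotes the Scheja--Storch form.

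Next, I would apply the trace $\Tr_{k(p)/k} : \GW(k(p)) \to \GW(k)$ to both sides. By Theorem \ref{thm:EKL-is-deg}, the left-hand side becomes $\deg^{\A^1}_p f$. By \cite[Proposition~32]{KW17}, the right-hand side becomes $w^{\mathrm{SS}}_p f$. Combining these two identifications yields
\begin{equation*}
    \deg^{\A^1}_p f \;=\; w^{\mathrm{SS}}_p f \quad \text{in } \GW(k),
\end{equation*}
which is the desired statement.

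The only substantive point to verify is that the ``canonical point above $p$'' used in our Theorem \ref{thm:EKL-is-deg} matches the point used in \cite[Proposition~32]{KW17}, and that the local algebras and distinguished socle elements underlying $w^{\mathrm{SS}}$ behave compatibly under the base change $k \to k(p)$; both are essentially bookkeeping, following from the fact that $k(p)/k$ is finite separable so that $\mathcal{O}_{\A_k^n, p} \otimes_k k(p)$ decomposes as a product whose factor at $\til{p}$ is the local ring of $f \otimes_k k(p)$ at $\til{p}$. No additional geometric input is needed.
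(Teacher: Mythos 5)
Your argument is exactly the paper's: chain together Theorem~\ref{thm:EKL-is-deg}, the rational-point identification of the local $\A^1$-degree with the Scheja--Storch form from \cite{KW-EKL}, and the trace formula for the Scheja--Storch form from \cite[Proposition~32]{KW17}. The order in which you present the three equalities differs superficially, but the content and the required compatibility of $\til{p}$ under base change are identical to the paper's proof.
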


In this paper we utilize the machinery of stable $\A^1$-homotopy theory, initially developed by Morel and Voevodsky \cite{MV}, as well as the six functors formalism in this setting \cite{Ayoub,Cisinski}. We also rely heavily on results of Hoyois \cite{hoyois} to prove our main result. After working in the stable $\A^1$-homotopy category, we apply Morel's $\A^1$-degree to obtain the desired equality in $\GW(k)$.

\begin{conventions}\label{conventions}
Throughout, we adopt the following conventions: 
\begin{itemize}
\item We will use $k$ to denote a general field. If $p$ is a point of a $k$-scheme, with $k(p)/k$ a separable extension of finite degree, we call $p$ a \textit{finite separable point}. We may also say that $p$ has a \textit{finite separable residue field} in this context. We remark that all such points are closed points. 

\item Whenever a finite separable point $p$ is chosen, we also choose an embedding $k \to k(p),$ to remain fixed for all subsequent discussion of $p$. The associated structure map we will always denote by $\rho : \Spec k(p) \to \Spec k$.
\item Given a scheme $X$ over $k$, we denote the base change $X\times_{\Spec{k}}\Spec{k(p)}$ by $X_{k(p)}$.
\item The structure map $\rho$ allows us to define the canonical $k(p)$-rational point in $\A^n_{k(p)}$ sitting above $p$, which we denote by $\tilde p$. 
\end{itemize}
\end{conventions}

\subsection{Acknowledgements}
We would like to thank Kirsten Wickelgren for her excellent guidance during this project. We would also like to thank Matthias Wendt and Jesse Kass for their helpful insights during the 2019 Arizona Winter School. We are grateful to an anonymous referee for comments and suggestions that greatly improved the paper. Finally, we would like to thank the organizers of the Arizona Winter School, as this work is the result of one of the annual AWS project groups. 

The first and fifth authors are supported by NSF Graduate Research Fellowships under grant numbers DGE-1845298 and DGE-1144152, respectively.

\section{Preliminaries}

In this section, we introduce the main notions necessary to state and prove Theorem~\ref{thm:EKL-is-deg}. We begin in Subsection~\ref{subsec:local-a1-deg} by defining the local $\A^1$-degree. In Subsection~\ref{subsec:SHK}, we highlight key properties of the stable motivic homotopy category in the form that we will need them. Finally, in Subsection~\ref{subsec:SS}, we discuss the Scheja--Storch form. 

We will assume some familiarity with motivic homotopy theory. For more detail about the category of motivic spaces $\Spc_k^{\A^1}$ and the unstable motivic homotopy category $\mathcal{H}(k)$, we refer the reader to the excellent expository articles \cite{antieau2016primer,unstablehandbook}. For the construction of the stable motivic homotopy category $\SH(k)$, we refer the reader to \cite{Morel-Trieste}. 

\begin{notation}\label{notation:stable-motivic-homotopy}
We denote by $\SH(k)$ the stable motivic homotopy category over the scheme $\Spec k$. The sphere spectrum in this category will be denoted by $\mathbf{1}_k$. We will also use $\left[ -,- \right]_{\A^1}$ to denote $\A^1$-weak equivalence classes of maps between two motivic spaces, by which we mean a hom-set in the homotopy category $\mathcal{H}(k)$.
\end{notation}

\subsection{The local \texorpdfstring{$\A^1$}{A1}-degree}\label{subsec:local-a1-deg}
Given an endomorphism of affine space $f: \A^n_k \to \A^n_k$ with an isolated zero at a point $p$, we describe how to obtain an endomorphism of the sphere spectrum in the stable motivic homotopy category $\SH(k)$, following the exposition of \cite[pp. 438--439]{KW-EKL}. We remind the reader of Conventions~\ref{conventions}, which we use in what follows.

As $p$ is an isolated zero of $f$, we may find an open neighborhood $U \subseteq \A^n_k$ for which $f^{-1}(0) \cap U = \{p\}$, that is, an open neighborhood containing no other zeros of $f$. Viewing $U \subseteq \A^n_k \subseteq \P^n_k$ as an open subset of projective space via any choice of affine chart on projective space, we may take a Nisnevich-local pushout diagram in $\Spc_k^{\A^1}$:
\[ \begin{tikzcd}
    U \minus \{p\} \rar\dar & \P^n_k \minus \{p\}\dar \\
    U\rar & \P^n_k.\po
\end{tikzcd} \]
This induces an $\A^1$-weak equivalence on cofibers
\begin{align*}
    \frac{U}{U \minus \{p\}} \xto{\sim} \frac{\P^n_k}{\P^n_k \minus \{p\}}.
\end{align*}
Our projective space is endowed with a canonical trivialization arising from the trivialization of affine space. Thus by the purity theorem \cite[Theorem~2.23]{MV}, we obtain canonical $\A^1$-weak equivalences with the Thom space of the trivial rank $n$ bundle over $\Spec k(p)$, and with the following cofiber of projective space:
\begin{align*}
   \frac{U}{U\minus\{p\}}\xto{\sim} \frac{\P^n_k}{\P^n_k \minus \{p\}} \simeq \Th(\O_{k(p)}^n) \simeq \left( \P^n_k / \P^{n-1}_k \right) \smashprod \Spec k(p)_+.
\end{align*}
We remark that $U$ was chosen to satisfy $f(U \minus \{p\}) \subseteq \A^n_k \minus \{0\}$, and we can perform a completely analogous procedure to obtain $\A^1$-weak equivalences
\begin{align*}
    \frac{\A^n_k}{\A^n_k \minus \{0\}} \xto{\sim} \frac{\P^n_k}{\P^n_k \minus \{0\}} \simeq \Th(\O_{k}^n)\simeq \P^n_k / \P^{n-1}_k.
\end{align*}
Recall that in differential topology, the local degree is defined as the homotopy class of an induced map of spheres about a point. The space $\P^n_k/\P^{n-1}_k$ analogously plays the role of a sphere in $\Spc^{\A^1}_k$ when constructing the local $\A^1$-degree. 

\begin{definition}\label{def:collapse}The \textit{collapse map} is the map  $c_p : \P^n_k/\P^{n-1}_k \to \P^n_k/(\P^n_k \minus \{p\})$ induced by the inclusion $\P^{n-1}_k \subseteq \P^n_k \minus \{p\}$. \end{definition} 

\begin{definition}\label{defn:htpy-class-of-maps-for-isolated-zero} For any $f: \A^n_k \to \A^n_k$ with an isolated zero at $p$, we denote by $f_p \in \left[\P^n_k/\P^{n-1}_k, \P^n_k/\P^{n-1}_k\right]_{\A^1}$ the $\A^1$-homotopy class of maps in the unstable motivic homotopy category assigned to the composite
\[
    \P^n_k/\P^{n-1}_k \xto{c_p} \frac{\P^n_k}{\P^n_k \minus \{p\}} \xfrom{\sim} \frac{U}{U\minus\{p\}} \xto{\bar{f}} \frac{\A^n_k}{\A^n_k \minus \{0\}} \xto{\sim} \P^n_k/\P^{n-1}_k.
\]
\end{definition}

\begin{remark} 
When $p$ is $k$-rational, one can avoid the collapse map by applying the purity theorem to obtain the composite
\[
    \P^n_k/\P^{n-1}_k \simeq \frac{U}{U\minus\{p\}}\xto{\bar{f}} \frac{\A^n_k}{\A^n_k \minus \{0\}} \xto{\sim} \P^n_k/\P^{n-1}_k.
\]
This composite yields the same element of $\left[\P^n_k/\P^{n-1}_k, \P^n_k/\P^{n-1}_k\right]_{\A^1}$ as in Definition~\ref{defn:htpy-class-of-maps-for-isolated-zero}. Indeed, by \cite[Lemma~10]{KW-EKL}, the composite of the collapse map with the canonical $\A^1$-weak equivalence $\P^n_k/(\P^n_k\minus\{0\}) \xto{\sim} \P^n_k/\P^{n-1}_k$ is the class of the identity map in $\left[\P^n_k/\P^{n-1}_k, \P^n_k/\P^{n-1}_k\right]_{\A^1}$.
\end{remark}

\begin{remark}\label{rmk:deg-independent-of-U} The $\A^1$-homotopy class of $f_p$ does not depend upon the original choice of open neighborhood $U \ni p$, provided that $U$ contains no other zeros of $f$ besides $p$. This follows immediately from our ability to provide an $\A^1$-weak equivalence between the cofiber $U/\left(U\minus \{p\}\right)$ and the Thom space $\Th(\O_k^n)$.
\end{remark}

We now describe how to obtain an endomorphism of the sphere spectrum in $\SH(k)$ from the class $f_p$ defined above. By \cite[Proposition~2.17]{MV}, we have a canonical $\A^1$-weak equivalence
\begin{equation}\label{eqn:cofiber-projective-space-is-sphere}
\begin{aligned}
    \P^n_k / \P^{n-1}_k \simeq \left(\P^1_k\right)^{\smashprod n}.
\end{aligned}
\end{equation}
We recall also that $\P^1 \simeq S^1 \smashprod \G_m$ as elements of the stable homotopy category. In particular by following the indexing convention of \cite{Morel-Trieste} for motivic spheres, we see that $\Sigma^\infty \P^1_k = \Sigma^{2,1} \mathbf{1}_k$ in $\SH(k)$, where $\mathbf{1}_k$ denotes the sphere spectrum. We therefore have that 
\[\Sigma^\infty  \P^n_k / \P^{n-1}_k \simeq \Sigma^{2n,n} \mathbf{1}_k\]
in $\SH(k)$. It is immediate that, by desuspending, we obtain a canonical isomorphism
\[\End_{\SH(k)} \left( \Sigma^{2n,n} \mathbf{1}_k \right) \cong \End_{\SH(k)}(\mathbf{1}_k)\]
in the stable homotopy category. Collecting these facts together, we see that $f_p$ determines an element in $\End_{\SH(k)}(\mathbf{1}_k)$. Abusing notation, we will refer to this endomorphism of the sphere spectrum as $f_p$.

\begin{theorem}[Morel]\label{thm:Morel-deg}
 For any field $k$, there is an isomorphism
\begin{equation}\label{eqn:morel-deg-iso}
\begin{aligned}
    \deg^{\A^1}: \End_{\SH(k)}(\mathbf{1}_k) \cong \GW(k).
\end{aligned}
\end{equation}
Morel initially required the assumption that $k$ be perfect \cite{Morel-book}, however this can be removed via work of Hoyois \cite[Appendix~A]{Hoy13}.
\end{theorem}

\begin{definition} 
With notation as above, the image of $f_p$ in $\GW(k)$ under $\deg^{\A^1}$ is the \textit{local $\A^1$-degree} of $f$ at $p$, denoted $\deg_p^{\A^1} (f)$.
\end{definition}

\subsection{Stable motivic homotopy theory}\label{subsec:SHK}
We begin by recalling a few concepts and results from stable motivic homotopy theory that will play a role in the proof of Theorem~\ref{thm:EKL-is-deg}. 

The category theory of $\SH(k)$ supports a six functor formalism, the general exposition of which we defer to \cite[\S2]{hoyois}. Indeed, for the purposes of this paper, we need only consider this formalism in the case of functors induced by maps $\rho: \Spec k(p) \to \Spec k$, where $k(p)/k$ is a finite separable field extension. We recall that we have an adjunction
\begin{align*}
    \rho^\ast : \SH(k) \rightleftarrows \SH(k(p)) : \rho_\ast.
\end{align*}
Since $\rho$ is separated and finite type, we also have an exceptional adjunction
\begin{align*}
    \rho_! : \SH(k(p)) \rightleftarrows \SH(k) : \rho^!.
\end{align*}
We denote by $\eta$ the unit of the adjunction between the direct and inverse image functors, and by $\epsilon$ the counit of the exceptional adjunction. That is, we have natural transformations: 
\begin{align*}
    \eta: \id_{\SH(k)} \Rightarrow \rho_\ast \rho^\ast \\
    \epsilon: \rho_! \rho^! \Rightarrow \id_{\SH(k)}.
\end{align*}
\begin{remark}\label{rmk:six-functors} 
To facilitate exposition, we pause here to provide references for a few basic facts about six functors which we will make use of in this paper. Let $\rho$ be as in Conventions~\ref{conventions}.
\begin{enumerate}
    \item Since $\rho$ is smooth, $\rho^\ast$ admits a left adjoint, denoted $\rho_\sharp$. As $\rho$ is furthermore finite and \'etale, we have a canonical equivalence $\rho_\ast \simeq \rho_\sharp$ \cite[p.21]{hoyois}.
    \item We have a canonical isomorphism $\rho_\ast\rho^\ast \mathbf{1}_{k} \simeq \rho_\ast \mathbf{1}_{k(p)}$. This is due to \cite[p.112,~Proposition~2.17(3)]{MV}. See also \cite[Equation~11]{KW-EKL}.
    \item Under our assumptions on $\rho$, we have canonical natural isomorphisms $\rho^! \simeq \rho^\ast$ and $\rho_! \simeq \rho_\sharp \simeq \rho_\ast$. This may be found in \cite[p.21]{hoyois}. In particular, we remark that $\rho_\sharp$ can be interpreted as a forgetful functor under the structure map $\rho$.
    \item There is a canonical equivalence $\rho_! \mathbf{1}_{k(x)} \cong \Spec k(x)_+$ in $\SH(k)$. See \cite[p.441]{KW-EKL}.
    \end{enumerate}
\end{remark}

We are now in a position to recall a description of the collapse map at the level of the stable motivic homotopy category.

\begin{lemma}\label{lem:collapse-map} \cite{hoyois,KW-EKL} In the stable homotopy category $\SH(k)$, the collapse map of Definition~\ref{def:collapse}
\begin{align*}
    c_p: \P_k^{n}/\P_k^{n-1} \to \P_k^{n}/(\P_k^{n}\minus\{p\}) \cong \left(\P^n_{k} / \P^{n-1}_{k}\right) \smashprod \Spec(k(p))_+
\end{align*}
is computed by applying $\P_k^{n}/\P_k^{n-1} \wedge(-)$ to the component of the unit $\eta$ at the sphere spectrum:
\begin{align*}
    \eta_{\mathbf{1}_k}: \textbf{1}_k \to \rho_\ast \rho^\ast \textbf{1}_k \cong \rho_\ast \textbf{1}_{k(p)}.
\end{align*}
\end{lemma}
\begin{proof}
The case $n=1$ may be found in \cite[Lemma~5.5]{hoyois}, and the proof generalizes to higher $n$ as in \cite[Lemma~13]{KW-EKL}.
\end{proof}

\begin{remark} 
We can furthermore describe $f_p \in \End_{\SH(k)}(\mathbf{1}_k)$ in the following way. Recall that $f$ induces a map
\[\bar{f}: U/(U\minus\{p\}) \to \A^n_k/(\A^n_k\minus\{0\}).\]
As above, we have $\A^1$-weak equivalences
\[U/(U\minus\{p\}) \simeq \left( \P^n_k/\P^{n-1}_k \right) \smashprod \Spec(k(p))_+\]
and 
\[ \A^n_k/(\A^n_k\minus\{0\}) \simeq \P^n_k/\P^{n-1}_k.\]
We may thus identify $\bar{f}$ with the composite $\left( \P^n_k/\P^{n-1}_k \right) \smashprod \Spec(k(p))_+ \to \P^n_k/\P^{n-1}_k$ in $\SH(k)$. By Definition~\ref{defn:htpy-class-of-maps-for-isolated-zero}, we have that $f_p$ is the composite of $\bar{f}$ and the collapse map. In $\SH(k)$, we can record this via the following commutative diagram:
\[ \begin{tikzcd}
    {\left( \P^n_k/\P^{n-1}_k \right) \smashprod \Spec(k(p))_+}\rar["\bar{f}" above] &\P^n_k/\P^{n-1}_k \\
    \P^n_k/\P^{n-1}_k\uar["{\P_k^{n}/\P_k^{n-1} \wedge(\eta_{\mathbf{1}_k})}" left]\ar[ur,bend right=20, "f_p" below right] & \\
\end{tikzcd} \]
\end{remark}
We now recall that the trace $\Tr_{k(p)/k}: \GW(k(p)) \to \GW(k)$ can be described purely in terms of maps in the motivic homotopy category, under the isomorphism of Theorem~\ref{thm:Morel-deg}. 

\begin{definition}\label{def:trace-stable-htpy} 
The \textit{transfer} 
\[
    \Tr_{k(p)/k} : \End_{\SH(k(p))}(\mathbf{1}_{k(p)}) \to \End_{\SH(k)}(\mathbf{1}_k)
\]
is defined by sending $\omega \in \End_{\SH(k(p))}(\mathbf{1}_{k(p)})$ to the composite
\begin{align*}
    \mathbf{1}_k \xto{\eta_{\mathbf{1}_k}} \rho_\ast \mathbf{1}_{k(p)} \simeq \rho_\sharp \mathbf{1}_{k(p)}\xto{\rho_\sharp \omega} \rho_\sharp \mathbf{1}_{k(p)} \simeq \rho_! \rho^! \mathbf{1}_k \xto{\epsilon_{\mathbf{1}_k}} \mathbf{1}_k.
\end{align*}
\end{definition}

\begin{lemma}\label{lem:hoyois-trace}\cite[Proposition~5.2,~Lemma~5.3]{hoyois} 
The transfer agrees with the field trace. That is, the diagram 
\[ \begin{tikzcd}
    \End_{\SH(k(p))} \left( \mathbf{1}_{k(p)} \right)\rar["\Tr_{k(p)/k}" above]\dar["\cong" left] & \End_{\SH(k)} \left( \mathbf{1}_{k} \right)\dar["\cong" right]  \\
    \GW(k(p))\rar["\Tr_{k(p)/k}" below] & \GW(k)\\
\end{tikzcd} \]
commutes, where the vertical maps are given by Morel's degree isomorphism (Equation~\ref{eqn:morel-deg-iso}), the top map is the transfer (Definition~\ref{def:trace-stable-htpy}), and the bottom map is the trace map on the Grothendieck--Witt group of $k$ (Definition~\ref{def:trace-GW}).
\end{lemma}

\subsection{The Scheja--Storch bilinear form}\label{subsec:SS}
We give a brief description of the Scheja--Storch bilinear form (see also \cite{SchejaStorch}, \cite{KW-EKL}, and \cite[Section 4]{KW17}). Given a polynomial map $$f=(f_1,\dots,f_n):\A^n_k\to\A^n_k$$ with isolated zero $p$, let $\mf{m}$ be the maximal ideal of $k[x_1,\ldots,x_n]$ corresponding to the point $p$. Consider the local algebra $Q_p=\frac{k[x_1,\ldots,x_n]_{\mf{m}}}{(f_1,\ldots,f_n)}$. As a local complete intersection, $Q_p$ is isomorphic to its dual $\Hom_k(Q_p,k)$. Scheja and Storch construct an explicit $Q_p$-linear isomorphism $\Theta:\Hom_k(Q_p,k)\to Q_p$ realizing this self-duality, which gives us a distinguished homomorphism $\eta:=\Theta^{-1}(1):Q_p\to k$.

\begin{definition}
Given a polynomical function $f: \A^n_k \to \A^n_k$, the {\it Scheja--Storch bilinear form} $\beta_p(f):Q_p\times Q_p\to k$ is given by $\beta_p(f)(x,y)=\eta(xy)$, where $\eta$ is defined in the preceeding paragraph.
\end{definition}
Since $Q_p$ is commutative, the Scheja--Storch bilinear form is symmetric. By \cite[Lemma 28]{KW-EKL} and \cite[Proposition 3.4]{EisenbudLevine}, the Scheja--Storch form is non-degenerate. Thus the Scheja--Storch form gives a class in $\GW(k)$, which we denote by $\ind_p(f)$ .

Kass--Wickelgren show that if $p$ is $k$-rational or if $f$ is \'etale at $p$, then $\deg^{\A^1}_p(f)=\ind_p(f)$ \cite{KW-EKL}. They  also show that if $p$ is a finite separable point, then $$\ind_p(f)=\Tr_{k(p)/k}\ind_{\til{p}}(f_{k(p)}),$$ where $\til{p}$ is the canonical $k(p)$-point above $p$ and $f_{k(p)}:\A^n_{k(p)}\to\A^n_{k(p)}$ is the lift of $f$ \cite[Proposition 32]{KW17}. Given these two results, one would expect Theorem~\ref{thm:EKL-is-deg} to be true.

\section{Main Results}\label{sec:main-proof}
We now proceed to the proof of the main theorem, as stated in Theorem~\ref{thm:EKL-is-deg}. Our first step is to apply the machinery of Subsection~\ref{subsec:SHK} to frame our problem in terms of motivic homotopy theory. 
Recall from Conventions~\ref{conventions} that we have already fixed a choice of field embedding $k \hookto k(p)$. Thus, for any $k$-scheme $X$ and any point $p \in X$, we have the canonical $k(p)$-rational point $\til{p} \in X_{k(p)}$ sitting over $p$, defined via the following pullback diagram:
\begin{align*}
    \begin{tikzcd}[ampersand replacement =\&]
    \Spec k(p)\ar[dr,dashed,"\til{p}" above right]\ar[drr,bend left=20,"\id" above right]\ar[ddr,bend right=20, "p" below left] \&  \&  \\
     \& X_{k(p)}\dar\rar\pb \& \Spec k(p) \dar["\rho" right] \\
     \& X\rar \& \Spec k.
    \end{tikzcd}
\end{align*}
We write $f_{k(p)} = f\otimes_k k(p)$ and $\pi: \A^n_{k(p)} \to \A^n_k$ for the morphisms induced by base change.
We then may consider the following diagram of $k$-schemes:
\begin{align*}
	\begin{tikzcd}[ampersand replacement =\&]
	\A^n_{k(p)}\rar["f_{k(p)}" above]\dar["\pi" left] \& \A^n_{k(p)}\dar["\pi" right] \\
	\A^n_k \rar["f" below] \& \A^n_k 
	\end{tikzcd} \quad\text{which maps}\quad
	\begin{tikzcd}[ampersand replacement =\&]
	\til{p} \rar[maps to] \dar[maps to] \& 0\dar[maps to] \\
	p\rar[maps to] \& 0.
	\end{tikzcd}
\end{align*}
Note that the point $\til{p}$ is a root of $f_{k(p)}$, so $f_{k(p)}$ has an isolated rational zero at $\til{p}$. Let $U \subseteq \A^n_{k(p)}$ be an open neighborhood containing $\til{p}$ and no other zeros of $f_{k(p)}$. As the structure map $\A^n_k \to \Spec k$ is universally open, $\pi$ is an open morphism of schemes. Thus, $\pi(U)$ is an open neighborhood of $p$ and contains no other zeros of $f$ by construction. Taking cofibers, we obtain an induced diagram of motivic spaces
\begin{equation}\label{eqn:local-diagram-1}
    \begin{tikzcd}
    \frac{U}{U \minus \left\{ \til{p} \right\}} \rar["\bar{f}_{k(p)}" above]\dar["\bar{\pi}_p" left]  & \frac{\A^n_{k(p)}}{\A^n_{k(p)} \minus \left\{ 0 \right\}} \dar["\bar{\pi}_0" right] \\
    \frac{\pi(U)}{\pi(U) \minus \left\{ p \right\}} \rar["\bar{f}" below]  & \frac{\A^n_k}{\A^n_k\minus \left\{ 0 \right\}}.
    \end{tikzcd}
\end{equation}
As discussed in Section~\ref{subsec:local-a1-deg}, we have the following $\A^1$-weak equivalences:
\begin{align*}
    \frac{U}{U \minus \{\til{p}\}} \xto{\sim} \frac{\P^n_{k(p)}}{\P^n_{k(p)} \minus \left\{ \til{p} \right\}} &\simeq \left(\P^n_k / \P^{n-1}_k\right)\smashprod \Spec(k(p))_+, \\
    \frac{\A^n_{k(p)} }{\A^n_{k(p)} \minus \left\{ 0 \right\}} \xto{\sim} \frac{\P^n_{k(p)}}{\P^n_{k(p)} \minus \left\{ 0 \right\}} &\simeq \left(\P^n_k / \P^{n-1}_k\right)\smashprod \Spec(k(p))_+, \\
    \frac{\pi(U)}{\pi(U) \minus \left\{ p \right\}} \xto{\sim} \frac{\P^n_k}{\P^n_k \minus \left\{ p \right\}} &\simeq \left(\P^n_k / \P^{n-1}_k\right)\smashprod \Spec(k(p))_+, \\
    \frac{\A^n_k}{\A^n_k \minus \left\{ 0 \right\}} \xto{\sim} \frac{\P^n_k}{\P^n_k \minus \left\{ 0 \right\}}  &\simeq \P^n_k/\P^{n-1}_k. 
\end{align*}
Appending these $\A^1$-weak equivalences to Diagram~\ref{eqn:local-diagram-1}, we obtain the following diagram in the unstable homotopy category $\mathcal{H}(k)$.
\begin{equation}\label{eqn:local-diagram-2}
    \begin{tikzcd}[column sep=tiny]
    & \left(\P^n_k / \P^{n-1}_k\right)\smashprod \Spec(k(p))_+ \ar[rr,dashed]\ar[dd,dashed]  & & \left(\P^n_k / \P^{n-1}_k\right)\smashprod \Spec(k(p))_+ \ar[dd,dashed] \\
   \frac{U}{U \minus \{\til{p}\}}\ar[ur,"\sim" above left]\ar[dd,"\bar{\pi}_p" left]\ar[rr,crossing over,"\bar{f}_{k(p)}" near start] & & \frac{\A^n_{k(p)}}{\A^n_{k(p)} \minus \{0\}}\ar[ur,"\sim" above left] \\
    & \left(\P^n_k / \P^{n-1}_k\right)\smashprod \Spec(k(p))_+ \ar[rr,dashed]  & &\P^n_k/\P^{n-1}_k \\
    \frac{\pi(U)}{\pi(U) \minus \{p\}}\ar[ur,"\sim" above left]\ar[rr,"\bar{f}" below]  & & \frac{\A^n_k}{\A^n_k \minus \{0\}}\ar[ur,"\sim" above left]\ar[from=uu,crossing over,"\bar{\pi}_0" near start]  \\
    \end{tikzcd}
\end{equation}
The dashed arrows above are obtained by inverting $\A^1$-weak equivalences.

We now turn our attention to the dashed face of the cube \eqref{eqn:local-diagram-2}. We obtain the class $f_p$ from Definition~\ref{defn:htpy-class-of-maps-for-isolated-zero} by pre-composing with the collapse map (see Diagram~\ref{eqn:local-diagram-3}). Working in the stable homotopy category, the top edge of the dashed face is exactly the image of $\left( f_{k(p)} \right)_{\til{p}}$ under $\SH(k(p))\to\SH(k)$. Taking suspension spectra, we get the following diagram in $\SH(k)$.

\begin{equation}\label{eqn:local-diagram-3}
    \begin{tikzcd}[column sep=large,row sep=large]
    \left(\P^n_k / \P^{n-1}_k\right)\smashprod \Spec(k(p))_+ \ar[r,"\rho_\ast (f_{k(p)})_{\til{p}}" above]\ar[d,swap,"r"]  & \left(\P^n_k / \P^{n-1}_k\right)\smashprod \Spec(k(p))_+ \ar[d,"g" right] \\
    \left(\P^n_k / \P^{n-1}_k\right)\smashprod \Spec(k(p))_+ \ar[r] &\P^n_k/\P^{n-1}_k \\
    \P^n_k/\P^{n-1}_k\uar["\left( \P^n_k/\P^{n-1}_k \right)\smashprod \eta_{\mathbf{1}_k}" left]\ar[ur, bend right=20,"f_p" below right]
    \end{tikzcd}
\end{equation}
The rest of our paper will center around the diagram above. In proving Theorem~\ref{thm:EKL-is-deg}, we will show that $r$ in Diagram~\eqref{eqn:local-diagram-3} is invertible in $\SH(k)$, which allows us to rewrite $f_p$ by exploiting the commutativity of this diagram.
 
\subsection{The stable classes of $r$ and $g$}
In order to analyze Diagram~\eqref{eqn:local-diagram-3} we state the following two lemmas, which allow us to characterize the $\SH(k)$-classes of $r$ and $g$, respectively.

\begin{lemma}\label{lem:stable-class-of-r}
Let $p\in \A^n_k$ be a closed point with finite separable residue field $k(p)/k$. Let $\pi:\A^n_{k(p)}\to \A^n_k$ be the projection map induced by the structure map $\rho: \Spec k(p) \to \Spec k$, and let $\til{p}\in \A^n_{k(p)}$ be the canonical $k(p)$-rational point above $p$. Then for any open neighborhood $U$ about $\til{p}$ such that $U\cap \pi^{-1}(p) = \{\til{p}\}$, the stable class in $\SH(k)$ of the map
\begin{equation}\label{eqn:thom-map-about-p}
\frac{U}{U\minus\{\til{p}\}}\to\frac{\pi(U)}{\pi(U)\minus\{p\}}
\end{equation}
is given by $(\P^n_k/\P^{n-1}_k)\wedge\rho_*\id_{\mathbf{1}_{k(p)}}$.
\end{lemma}

\begin{proof}
The base change $\pi:\A^n_{k(p)}\to\A^n_k$ is simply $\Spec$ applied to the $k$-algebra homomorphism $\iota:k[x_1,...,x_n]\hookto k(p)[x_1,...,x_n]$. As $\iota(x_i)=x_i$, we get an induced map $T\A^n_{k(p)}\to\pi^*T\A^n_k$ which in turn induces an isomorphism $(T\A^n_{k(p)})_{\til{p}}\xto{\sim}(\pi^*T\A^n_k)_{\til{p}}$. 
The right hand side is easily seen to be $T_p\A^n_k\otimes_k k(p)$. As $k(p)$-vector spaces, we have isomorphisms $T_{\til{p}}\A^n_{k(p)}\cong T_p\A^n_k\otimes_k k(p)\cong\A^n_{k(p)}$. 

Next, we consider the Thom spaces $\Th(T_{\til{p}}\A^n_{k(p)})$ and $\Th(T_p\A^n_k\otimes_k k(p))$. Via the purity isomorphism \cite[Theorem~2.23]{MV}, we have weak equivalences of $k$-motivic spaces
\begin{align*}
    \frac{U}{U \minus \{\til{p}\}} &\simeq \Th \left( T_{\til{p}} \A^n_{k(p)} \right)\xto{\sim} \left(\P^n_k / \P^{n-1}_k\right)\smashprod \Spec(k(p))_+,  \\
   \frac{\pi(U)}{\pi(U) \minus \{p\}} &\simeq \Th \left( T_p \A^n_k \right)\xto{\sim} \left(\P^n_k / \P^{n-1}_k\right)\smashprod \Spec(k(p))_+.
\end{align*}
Since base change is a left adjoint and Thom spaces are obtained by taking colimits, we deduce 
\[ \Th(T_p\A^n_k\otimes_k k(p))\simeq\rho^\ast\Th(T_p\A^n_k)\]
as $k(p)$-motivic spaces. Moreover, purity implies that $\Th(T_p \A^n_k)$ is a $k(p)$-motivic space, so its base change to $k(p)$ is canonically identified with itself in the homotopy category of $k(p)$-motivic spaces. That is, the class in $\SH(k(p))$ of Equation~\ref{eqn:thom-map-about-p} is given by the class in $\SH(k(p))$ of the canonical $\A^1$-weak equivalence $\Th(T_{\til{p}}\A^n_{k(p)})\xto{\sim} \Th(T_p\A^n_k).$ This has the class of $$(\P^n_{k(p)}/\P^{n-1}_{k(p)})\wedge\id_{\mathbf{1}_{k(p)}}$$ in $\SH(k(p))$ under the identification given by purity. Finally, we push forward via the functor $\rho_\ast : \SH(k(p)) \to \SH(k)$ to get that the class of Equation~\ref{eqn:thom-map-about-p} is $(\P^n_{k}/\P^{n-1}_{k})\wedge\rho_\ast \id_{\mathbf{1}_{k(p)}}$. 
\end{proof}

\begin{lemma}\label{lem:stable-class-of-g}
Let $k(p)/k$ be a finite separable field extension, let $q\in \A^n_k$ be any $k$-rational point, and let $\pi:\A^n_{k(p)}\to \A^n_k$ be the projection map induced by the structure map $\rho: \Spec k(p) \to \Spec k$. Denote by $\til{q}\in \A^n_{k(p)}$ the canonical $k(p)$-rational point above $q$. Then for any open neighborhood $U$ containing $\til{q}$, the stable class in $\SH(k)$ of the map
\begin{equation}\label{eqn:thom-map-about-q}
\frac{U}{U\minus\{\til{q}\}}\to\frac{\pi(U)}{\pi(U)\minus\{q\}}
\end{equation}
is given by $(\P^n_k/\P^{n-1}_k)\wedge\epsilon_{\mathbf{1}_k}$.
\end{lemma}
\begin{proof}
Since $q$ is $k$-rational, $\til{q}$ is the unique canonical $k(p)$-rational point above $q$, so we may assume that $\til{q}$ and $q$ are the origins of $\A^n_{k(p)}$ and $\A^n_k$, respectively, in which case we may take $U = \A^n_{k(p)}$. It thus suffices to consider the class in $\SH(k)$ of the map of Thom spaces $\Th_0 \A^n_{k(p)} \to \Th_0 \A^n_k$ induced by the canonical map $\pi: \A^n_{k(p)} \to \A^n_k$. By viewing affine space as a trivial vector bundle over the origin, \cite[p.9]{hoyois} implies that this is the desired component of the counit of the exceptional adjunction.
\end{proof}

\begin{corollary}\label{cor:g and r}
The map $g$ in Diagram~\ref{eqn:local-diagram-3} is $\left(\P^n_k /\P^{n-1}_k\right) \smashprod \epsilon_{\mathbf{1}_k}$, and the map $r$ is $\left(\P^n_k /\P^{n-1}_k\right) \smashprod \rho_\ast\id_{\mathbf{1}_{k(p)}}$.
\end{corollary}

\begin{remark} 
Lemmas~\ref{lem:stable-class-of-r} and~\ref{lem:stable-class-of-g} hold more generally for schemes that are locally isomorphic to affine space, as we rely only on local computations in their proofs.
\end{remark}

\subsection{Proof of Theorem~\ref{thm:EKL-is-deg}}
By Corollary~\ref{cor:g and r}, we may rewrite Diagram~\eqref{eqn:local-diagram-3} as
\begin{equation}\label{eqn:local-diagram-with-transfers}
    \begin{tikzcd}[column sep=large,row sep=large]
    \left(\P^n_k / \P^{n-1}_k\right)\smashprod \Spec(k(p))_+ \ar[r,"\rho_\ast (f_{k(p)})_{\til{p}}" above]\ar[d,"\left( \P^n_k/\P^{n-1}_k\right) \smashprod \rho_\ast \id_{\mathbf{1}_{k(p)}}" left, "\sim" right]  & \left(\P^n_k / \P^{n-1}_k\right)\smashprod \Spec(k(p))_+ \ar[d,"\left( \P^n_k/\P^{n-1}_k \right)\smashprod \epsilon_{\mathbf{1}_k}" right] \\
    \left(\P^n_k / \P^{n-1}_k\right)\smashprod \Spec(k(p))_+ \ar[r] &\P^n_k/\P^{n-1}_k \\
    \P^n_k/\P^{n-1}_k\uar["\left( \P^n_k/\P^{n-1}_k \right)\smashprod \eta_{\mathbf{1}_k}" left]\ar[ur, bend right=20,"f_p" below right]
    \end{tikzcd}
\end{equation}

In the stable homotopy category $\SH(k)$, Diagram~\ref{eqn:local-diagram-with-transfers} is $\P^n_k /\P^{n-1}_k \smashprod (-)$ applied to the diagram
\begin{equation}\label{eqn:diagram-of-sphere-spectra}
    \begin{tikzcd}[column sep=large,row sep=large]
    \rho_\ast \mathbf{1}_{k(p)} \ar[r,"\rho_\ast (f_{k(p)})_{\til{p}}" above]\ar[d," \rho_\ast \id_{\mathbf{1}_{k(p)}}" left, "\sim" right]  &  \rho_\ast \mathbf{1}_{k(p)} \ar[d,"\epsilon_{\mathbf{1}_k}" right] \\
    \rho_\ast \mathbf{1}_{k(p)} \ar[r] & \mathbf{1}_k \\
    \mathbf{1}_k\uar["\eta_{\mathbf{1}_k}" left]\ar[ur, bend right=20,"f_p" below right]
    \end{tikzcd}
\end{equation}

We remark that we are able to invert the weak equivalence $\rho_*\id_{\mathbf{1}_{k(p)}}$ of Diagram~\ref{eqn:diagram-of-sphere-spectra}. Since Diagram~\ref{eqn:diagram-of-sphere-spectra} is commutative, we may express $f_p$ as the composite
\begin{align*}
    \mathbf{1}_k \xto{\eta_{\mathbf{1}_k}} \rho_\ast \rho^\ast \mathbf{1}_{k} \simeq \rho_\ast \mathbf{1}_{k(p)} \xto{\rho_\ast \left( f_{k(p)} \right)_{\til{p}}} \rho_\ast \mathbf{1}_{k(p)} \simeq \rho_! \rho^! \mathbf{1}_k \xto{\epsilon_{\mathbf{1}_k}} \mathbf{1}_k.
\end{align*}
Recall that in the setting of Theorem~\ref{thm:EKL-is-deg}, the morphism $\rho$ is finite and \'etale, which gives a canonical isomorphism $\rho_\ast\simeq\rho_\sharp$ (Remark~\ref{rmk:six-functors}). Thus by Definition~\ref{def:trace-stable-htpy}, we have $f_p = \Tr_{k(p)/k} \left( f_{k(p)} \right)_{\til{p}}$. Applying Lemma~\ref{lem:hoyois-trace}, we conclude that $\deg_{p}^{\A^1} f = \Tr_{k(p)/k} \deg_{\til{p}}^{\A^1} f_{k(p)}$,
as desired.

\subsection{A brief proof of Corollary~\ref{cor:SS-is-local-deg}}

In \cite[Proposition~32]{KW17}, the authors prove that the Scheja--Storch bilinear form, denoted $\ind_p f$, is computed by the trace $\ind_p f = \Tr_{k(p)/k} \ind_{\til{p}} f_{k(p)}$. Moreover in \cite{KW-EKL}, the authors prove that at any rational point, the Scheja--Storch form agrees with the local $\A^1$-degree. Combining these two results with Theorem~\ref{thm:EKL-is-deg}, for any isolated zero $p$ with finite separable residue field we have that
\begin{align*}
    \ind_p f &= \Tr_{k(p)/k} \ind_{\til{p}} f_{k(p)} = \Tr_{k(p)/k} \deg_{\til{p}}^{\A^1} f_{k(p)} = \deg_p^{\A^1} f.
\end{align*}

\bibliographystyle{amsalpha}
\bibliography{citations.bib}{}

\providecommand{\bysame}{\leavevmode\hbox to3em{\hrulefill}\thinspace}
\providecommand{\MR}{\relax\ifhmode\unskip\space\fi MR }
\providecommand{\MRhref}[2]{%
  \href{http://www.ams.org/mathscinet-getitem?mr=#1}{#2}
}
\providecommand{\href}[2]{#2}
\begin{thebibliography}{{Hoy}15b}

\bibitem[AE16]{antieau2016primer}
Benjamin Antieau and Elden Elmanto, \emph{A primer for unstable motivic
  homotopy theory}, 2016.

\bibitem[Ayo07]{Ayoub}
Joseph Ayoub, \emph{Les six op\'{e}rations de {G}rothendieck et le formalisme
  des cycles \'{e}vanescents dans le monde motivique. {I}}, Ast\'{e}risque
  (2007), no.~314, x+466 pp. (2008). \MR{2423375}

\bibitem[BKW18]{Bethea}
Candace Bethea, Jesse~Leo Kass, and Kirsten Wickelgren, \emph{An example of
  wild ramification in an enriched {R}iemann-{H}urwitz formula}, arXiv preprint
  arXiv:1812.03386 (2018).

\bibitem[CD19]{Cisinski}
Denis-Charles Cisinski and Fr\'{e}d\'{e}ric D\'{e}glise, \emph{Triangulated
  categories of mixed motives}, Springer Monographs in Mathematics, Springer,
  Cham, [2019] \copyright 2019. \MR{3971240}

\bibitem[EL77]{EisenbudLevine}
David Eisenbud and Harold~I. Levine, \emph{An algebraic formula for the degree
  of a {$C^{\infty }$} map germ}, Ann. of Math. (2) \textbf{106} (1977), no.~1,
  19--44, With an appendix by Bernard Teissier, ``Sur une in\'{e}galit\'{e} \`a
  la Minkowski pour les multiplicit\'{e}s''. \MR{0467800}

\bibitem[Hoy15a]{Hoy13}
Marc Hoyois, \emph{From algebraic cobordism to motivic cohomology}, J. Reine
  Angew. Math. \textbf{702} (2015), 173--226. \MR{3341470}

\bibitem[{Hoy}15b]{hoyois}
Marc {Hoyois}, \emph{A quadratic refinement of the
  {G}rothendieck--{L}efschetz--{V}erdier trace formula}, Algebraic \& Geometric
  Topology \textbf{14} (2015), no.~6, 3603--3658.

\bibitem[KW17]{KW17}
Jesse~Leo {Kass} and Kirsten {Wickelgren}, \emph{{An arithmetic count of the
  lines on a smooth cubic surface}}, ArXiv e-prints (2017).

\bibitem[KW19]{KW-EKL}
Jesse~Leo Kass and Kirsten Wickelgren, \emph{The class of
  {E}isenbud-{K}himshiashvili-{L}evine is the local {$\mathbf{A}^1$}-{B}rouwer
  degree}, Duke Math. J. \textbf{168} (2019), no.~3, 429--469. \MR{3909901}

\bibitem[Lev17]{Levine}
Marc Levine, \emph{Toward an enumerative geometry with quadratic forms}, arXiv
  preprint arXiv:1703.03049 (2017).

\bibitem[LV19]{VogtLarson}
Hannah Larson and Isabel Vogt, \emph{An enriched count of the bitangents to a
  smooth plane quartic curve}, arXiv preprint arXiv:1909.05945 (2019).

\bibitem[Mor04]{Morel-Trieste}
Fabien Morel, \emph{An introduction to {$\mathbb{A}^1$}-homotopy theory},
  Contemporary developments in algebraic {$K$}-theory, ICTP Lect. Notes, XV,
  Abdus Salam Int. Cent. Theoret. Phys., Trieste, 2004, pp.~357--441.
  \MR{2175638}

\bibitem[Mor12]{Morel-book}
\bysame, \emph{{$\mathbb{A}^1$}-algebraic topology over a field}, Lecture Notes
  in Mathematics, vol. 2052, Springer, Heidelberg, 2012. \MR{2934577}

\bibitem[MV99]{MV}
Fabien Morel and Vladimir Voevodsky, \emph{{${\bf A}^1$}-homotopy theory of
  schemes}, Inst. Hautes \'{E}tudes Sci. Publ. Math. (1999), no.~90, 45--143
  (2001). \MR{1813224}

\bibitem[SS75]{SchejaStorch}
G\"{u}nter Scheja and Uwe Storch, \emph{\"{U}ber spurfunktionen bei
  vollst\"{a}ndigen durchschnitten.}, Journal f\"{u}r die reine und angewandte
  Mathematik \textbf{0278\_0279} (1975), 174--190.

\bibitem[SW18]{SW}
Padmavathi Srinivasan and Kirsten Wickelgren, \emph{An arithmetic count of the
  lines meeting four lines in $\mathbf{P}^{3}$}, arXiv preprint
  arXiv:1810.03503 (2018).

\bibitem[WW19]{unstablehandbook}
Kirsten Wickelgren and Ben Williams, \emph{Unstable motivic homotopy theory},
  2019.

\end{thebibliography}
\end{document}